\def\NZQ{\mathbb}               
\def\ZZ{{\NZQ Z}}
\def\RR{{\NZQ R}}
\def\frk{\mathfrak}               
\def\Phi{{\frk N}}
\def\opn#1#2{\def#1{\operatorname{#2}}} 
\opn\chara{char} 
\opn\length{\ell} 
\opn\pd{pd} 
\opn\rk{rk}
\opn\projdim{proj\,dim} 
\opn\injdim{inj\,dim} 
\opn\rank{rank}
\opn\depth{depth} 
\opn\grade{grade} 
\opn\height{height}
\opn\embdim{emb\,dim} 
\opn\codim{codim}
\opn\Tr{Tr} 
\opn\bigrank{big\,rank}
\opn\superheight{superheight}
\opn\lcm{lcm}
\opn\trdeg{tr\,deg}
\opn\reg{reg} 
\opn\lreg{lreg} 
\opn\ini{in} 
\opn\lpd{lpd}
\opn\size{size}
\opn\mult{mult}
\opn\dist{dist}
\opn\cone{cone}
\opn\lex{lex}
\opn\rev{rev}
\opn\div{div} \opn\Div{Div} \opn\cl{cl} \opn\Cl{Cl}
\opn\Spec{Spec} \opn\Supp{Supp} \opn\supp{supp} \opn\Sing{Sing}
\opn\Ass{Ass} \opn\Min{Min}
\opn\Ann{Ann} \opn\Rad{Rad} \opn\Soc{Soc}
\opn\Syz{Syz} \opn\Im{Im} \opn\Ker{Ker} \opn\Coker{Coker}
\opn\Am{Am} \opn\Hom{Hom} \opn\Tor{Tor} \opn\Ext{Ext}
\opn\End{End} \opn\Aut{Aut} \opn\id{id} \opn\ini{in}
\opn\nat{nat}
\opn\pff{pf}
\opn\Pf{Pf} \opn\GL{GL} \opn\SL{SL} \opn\mod{mod} \opn\ord{ord}
\opn\Gin{Gin}
\opn\Hilb{Hilb}\opn\adeg{adeg}\opn\std{std}\opn\ip{infpt}
\opn\Pol{Pol}
\opn\sat{sat}
\opn\Var{Var}
\opn\Gen{Gen}
\opn\aff{aff} \opn\con{conv} \opn\relint{relint} \opn\st{st}
\opn\lk{lk} \opn\cn{cn} \opn\core{core} \opn\vol{vol}
\opn\link{link} \opn\star{star}
\opn\gr{gr}
\def\Fc{{\mathcal F}}
\def\Pc{{\mathcal P}}
\def\Qc{{\mathcal Q}}
\def\vol{{\textnormal{vol}}}
\def\conv{{\textnormal{conv}}}
\def\pot#1#2{#1[\kern-0.28ex[#2]\kern-0.28ex]}
\opn\dirlim{\underrightarrow{\lim}}
\opn\inivlim{\underleftarrow{\lim}}
\def\Implies{\ifmmode\Longrightarrow \else
	\unskip${}\Longrightarrow{}$\ignorespaces\fi}
\def\implies{\ifmmode\Rightarrow \else
	\unskip${}\Rightarrow{}$\ignorespaces\fi}
\def\iff{\ifmmode\Longleftrightarrow \else
	\unskip${}\Longleftrightarrow{}$\ignorespaces\fi}
\newtheorem{Theorem}{Theorem}[section]
\newtheorem{Lemma}[Theorem]{Lemma}
\newtheorem{Question}[Theorem]{Question}
\newtheorem*{acknowledgement}{Acknowledgment}
\let\epsilon\varepsilon
\let\phi=\varphi
\let\kappa=\varkappa
\def\qed{\ifhmode\textqed\fi
	\ifmmode\ifinner\quad\qedsymbol\else\dispqed\fi\fi}
\def\textqed{\unskip\nobreak\penalty50
	\hskip2em\hbox{}\nobreak\hfil\qedsymbol
	\parfillskip=0pt \finalhyphendemerits=0}
\def\dispqed{\rlap{\qquad\qedsymbol}}
\opn\dis{dis}
\opn\height{height}
\opn\dist{dist}
\def\pnt{{\raise0.5mm\hbox{\large\bf.}}}
\opn\Lex{Lex}
\opn\conv{conv}
\begin{document}
	
	\title{Flat $\delta$-vectors and their Ehrhart polynomials}
	\author[T. Hibi]{Takayuki Hibi}
	\address[Takayuki Hibi]{Department of Pure and Applied Mathematics,
		Graduate School of Information Science and Technology,
		Osaka University,
		Suita, Osaka 565-0871, Japan}
	\email{hibi@math.sci.osaka-u.ac.jp}
	\author[A. Tsuchiya]{Akiyoshi Tsuchiya}
	\address[Akiyoshi Tsuchiya]{Department of Pure and Applied Mathematics,
		Graduate School of Information Science and Technology,
		Osaka University,
		Suita, Osaka 565-0871, Japan}
	\email{a-tsuchiya@cr.math.sci.osaka-u.ac.jp}
	
	\subjclass[2010]{52B05, 52B20}
	\date{}
	\keywords{Ehrhart polynomial, $\delta$-vector, integral convex polytope}
	
	\begin{abstract}
	We call the $\delta$-vector of an integral convex polytope of dimension $d$ flat if the $\delta$-vector is of the form $(1,0,\ldots,0,a,\ldots,a,0,\ldots,0)$, where $a \geq 1$. In this paper, we give the complete characterization of possible flat $\delta$-vectors. Moreover, for an integral convex polytope $\mathcal{P} \subset \mathbb{R}^N$ of dimension $d$, we let $i(\mathcal{P},n)=|n\mathcal{P} \cap \mathbb{Z}^N|$ and $\ i^*(\mathcal{P},n)=|n(\mathcal{P} \setminus \partial \mathcal{P}) \cap \mathbb{Z}^N|.$ By this characterization, we show that for any $d \geq 1$ and for any $k,\ell \geq 0$ with $k+\ell \leq d-1$, there exist integral convex polytopes $\mathcal{P}$ and $\mathcal{Q}$ of dimension $d$ such that (i) For $t=1,\ldots,k$, we have $i(\mathcal{P},t)=i(\mathcal{Q},t),$ (ii) For $t=1,\ldots,\ell$, we have $i^*(\mathcal{P},t)=i^*(\mathcal{Q},t)$ and (iii) $i(\mathcal{P},k+1) \neq i(\mathcal{Q},k+1)$ and $i^*(\mathcal{P},\ell+1)\neq i^*(\mathcal{Q},\ell+1).$
	\end{abstract}
	 \maketitle
	 \section*{Introduction}
	 Let $\Pc \subset \RR^N$ be an integral convex polytope of dimension $d$ and $\partial \Pc$ its boundary.
	 Here an integral convex polytope is a convex polytope all of whose vertices have integer coordinates.
	 For $n=1,2,\ldots$, we write
	 \[i(\Pc,n)=|n\Pc \cap \ZZ^N|, \  \ i^*(\Pc,n)=|n(\Pc \setminus \partial \Pc) \cap \ZZ^N|, \]
	 where $n\Pc = \{ \, n \alpha \, : \, \alpha \in \Pc \, \}$ and $|X|$ is the cardinality of a finite set $X$.
	 The enumerative function $i(\Pc,n)$ has the following fundamental properties,
	 which were studied originally in the work of Ehrhart \cite{Ehrhart}:
	 \begin{itemize}
	 	\item $i(\Pc,n)$ is a polynomial in $n$ of degree $d$;
	 	\item $i(\Pc,0)=1$;
	 	\item (loi de r\'{e}ciprocit\'{e}) $i^*(\Pc,n)=(-1)^di(\Pc,-n)$ for every integer $n>0$.
	 \end{itemize}
	 This polynomial $i(\Pc,n)$ is called the \textit{Ehrhart polynomial} of $\Pc$.
	 Consult 
	 \cite[Part II]{HibiRedBook} and \cite[pp.~235--241]{StanleyEC}
	 for fundamental materials on Ehrhart polynomials.
	 
	  We define the sequence $\delta_0,\delta_1, \delta_2, \ldots$ of integers by the formula
	  \[ (1-\lambda)^{d+1} \sum\limits_{n=0}^{\infty} i(\Pc,n)\lambda^n=\sum\limits_{j=0}^{\infty} \delta_j \lambda^j.\]
	  Since $i(\Pc,n)$ is a polynomial in $n$ of degree $d$, a fundamental fact on generating
	  functions (\cite[Corollary 4.3.1]{StanleyEC}) guarantees that $\delta_j=0$ for every $j>d$.
	  The sequence $\delta(\Pc)=(\delta_0,\delta_1,\ldots,\delta_d)$ is called the $\delta$-vector of $\Pc$.
	  The following properties on $\delta$-vectors are well known:
	  \begin{itemize}
	  	\item $\delta_0=1, \delta_1=|\Pc \cap \ZZ^N|-(d+1)$ and $\delta_d=|(\Pc\setminus\partial\Pc)\cap \ZZ^N|$;
	  	\item $i(\Pc,n)=\sum\limits_{j=0}^{d}\binom{n+d-j}{d}\delta_j;$
	  	\item Each $\delta_i$ is nonnegative (\cite{StanleyDRCP});
	  	\item Let $s=\text{max}\{i : \delta_i \neq 0\}$. Then for $t=1,\ldots, d-s$, we have $i^*(\Pc,t)=0$ and $i^*(\Pc,d-s+1)=\delta_s$;
	  	\item If $N=d$, the leading coefficient $(\sum_{i=0}^d\delta_i)/d!$ of $i(\Pc,n)$ 
	  	is equal to the usual volume of $\Pc$ (\cite[Proposition 4.6.30]{StanleyEC}). 
	  	In general, the positive integer $\vol(\Pc) = \sum_{i=0}^d\delta_i$ 
	  	is said to be the {\em normalized volume} of $\Pc$. 
	  \end{itemize}
	 Through this paper, we assume that $N=d$.
	 
	 We call the $\delta$-vector of an integral convex polytope of dimension $d$ \textit{flat} if the $\delta$-vector is of the form $(1,0,\ldots,0,a,\ldots,a,0,\ldots,0)$, where $a \geq 1$.  
	 In this paper, we will give the complete characterization of possible flat $\delta$-vectors.
	 In fact, we show the following theorem.
	 \begin{Theorem}
	 	\label{joint}
	 	Let $d \geq 1$ and $k, \ell \geq 0$ with $k+\ell \leq d-1$ and $a \geq 1$.
	 	Given a finite sequence 
	 	$$(\delta_0,\ldots,\delta_d)
	 	=(1, \underbrace{0,\ldots,0}_k,a,\ldots,a,\underbrace{0,\ldots,0}_{\ell}),$$ 
	 	there exists an integral convex polytope $\Pc \subset \RR^d$ of dimension $d$ whose $\delta$-vector coincides with $(\delta_0,\ldots,\delta_d)$ if and only if
	 	$k \leq \ell$.
	 \end{Theorem}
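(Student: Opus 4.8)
My plan is to prove the two implications by different means: necessity from standard inequalities on $\delta$-vectors, and sufficiency by exhibiting explicit lattice simplices.

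\emph{Necessity.} Assume $\Pc\subset\RR^d$ realizes the flat vector $(1,0^{k},a,\dots,a,0^{\ell})$. I would invoke the inequalities of Hibi,
\[
\delta_{d-i+1}+\delta_{d-i+2}+\cdots+\delta_{d}\ \le\ \delta_{1}+\delta_{2}+\cdots+\delta_{i}
\qquad(1\le i\le\lfloor d/2\rfloor),
\]
whose case $i=1$ is the elementary bound $\delta_{1}\ge\delta_{d}$ (true because $|\partial\Pc\cap\ZZ^{d}|$ is at least the number $d+1$ of vertices). Because $\delta_{1}=\cdots=\delta_{k}=0$, the right-hand side is $0$ as soon as $i\le k$. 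I would first rule out $k>\lfloor d/2\rfloor$: applying the inequality at $i=\lfloor d/2\rfloor$ would force $\delta_{d-\lfloor d/2\rfloor+1}=\cdots=\delta_{d}=0$, i.e.\ $\ell\ge\lfloor d/2\rfloor$, and then $k+\ell\ge(\lfloor d/2\rfloor+1)+\lfloor d/2\rfloor\ge d$, contradicting $k+\ell\le d-1$. Hence $k\le\lfloor d/2\rfloor$, and applying the inequality at $i=k$ gives $\delta_{d-k+1}=\cdots=\delta_{d}=0$; thus $\Pc$ has at least $k$ trailing zeros, i.e.\ $k\le\ell$.

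\emph{Sufficiency: framework.} For the converse I would work entirely with lattice simplices, computing the $\delta$-vector of $\Delta=\conv(0,w_{1},\dots,w_{d})$ as the distribution by height of the lattice points in the half-open fundamental parallelepiped of the cone over $\Delta$ (so $\delta_i$ counts the box points at height $i$). Two reductions structure the construction. Since the lattice pyramid over a polytope preserves the $\delta$-polynomial while raising the dimension by one (so a trailing $0$ is appended to the $\delta$-vector), performing $\ell-k$ pyramids reduces everything to realizing the \emph{symmetric} flat vector $(1,0^{k},a,\dots,a,0^{k})$ of dimension $m+2k$, where $m=d-k-\ell$ is the number of $a$'s. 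It therefore suffices to build, for every $m\ge1$, $a\ge1$ and $k\ge0$, a simplex with this symmetric $\delta$-vector.

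\emph{Sufficiency: base case and a shift operation.} I would start from a simplex realizing the base vector $(1,a,\dots,a)$ ($k=0$) in dimension $m$, obtained by choosing the single free vertex of a simplex so that its $\ZZ/(am+1)$ box places exactly $a$ points at each height $1,\dots,m$. To pass from $k$ to $k+1$, I would use the dimension-raising operation that sends $\Delta=\conv(0,w_1,\dots,w_n)\subset\RR^{n}$ to
\[
\Delta'=\conv\bigl(0,\,e_1,\,e_2,\,e_1+e_2+w_1',\,\dots,\,e_1+e_2+w_n'\bigr)\subset\RR^{n+2},
\]
where $w_i'$ is $w_i$ placed in the last $n$ coordinates. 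A direct parallelepiped computation (which I have checked on the smallest cases, e.g.\ $(1,1)\mapsto(1,0,1,0)$ and $(1,1,1)\mapsto(1,0,1,1,0)$) shows that every box point of $\Delta$ at height $h\ge1$ lifts to a box point of $\Delta'$ at height $h+1$, while the origin stays at height $0$; equivalently the $\delta$-polynomial transforms as $\delta_{\Delta'}(t)=1+t\bigl(\delta_{\Delta}(t)-1\bigr)$. This prepends one $0$ and appends one $0$, so $\Delta'$ realizes $(1,0^{k+1},a,\dots,a,0^{k+1})$ whenever $\Delta$ realizes $(1,0^{k},a,\dots,a,0^{k})$. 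Iterating $k$ times from the base case, and then pyramiding, produces every flat vector with $k\le\ell$.

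\emph{Main obstacle.} The inequality step makes necessity essentially formal once the correct inequality is cited. The real work is in sufficiency: establishing the base case for arbitrary $a$ (choosing the simplex weights so that a Dedekind-sum--type count deposits exactly $a$ box points at each of the heights $1,\dots,m$) and verifying in general that the shift operation $\Delta\mapsto\Delta'$ has precisely the stated effect on the whole $\delta$-vector. Both amount to controlled lattice-point counts in fundamental parallelepipeds, and getting these counts exactly right — not merely up to the correct total normalized volume — is where the care is needed.
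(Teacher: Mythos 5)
Your necessity argument is correct and coincides with the paper's: it is exactly Hibi's inequality (2) from Section~1 (your range $1\le i\le\lfloor d/2\rfloor$ sits inside the valid range), and your preliminary step ruling out $k>\lfloor d/2\rfloor$ handles the range issue cleanly. Your sufficiency framework also matches the paper in two of its three ingredients: the pyramid reduction to the symmetric vector $(1,0^k,a,\dots,a,0^k)$ is Lemma~\ref{delta}, and the base case $(1,a,\dots,a)$ is Lemma~\ref{full}. Where you genuinely diverge is in realizing the symmetric vector for $k\ge 1$: the paper writes down a single explicit simplex for each $(d,k,a)$, with vertices $0,e_1,\dots,e_{d-1}$ and $\sum_{j=1}^{d-k}e_j+a(d-2k)\sum_{j=d-k+1}^{d-1}e_j+(a(d-2k)+1)e_d$, and computes its box points in one shot (Lemma~\ref{even}), whereas you induct on $k$ via a codimension-two shift operation. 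Your route is more structural, but it is also where the gap lies.

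The gap is that the transformation law $\delta_{\Delta'}(t)=1+t\bigl(\delta_{\Delta}(t)-1\bigr)$ is false for a general $\Delta=\conv(0,w_1,\dots,w_n)$. Writing a box point of $\Delta'$ with coefficients $\mu_0,\mu_1,\mu_2,\nu_1,\dots,\nu_n$ and setting $s=\sum_i\nu_i$, integrality of the first two coordinates forces $\mu_1=\mu_2=1-\{s\}$ when the fractional part $\{s\}$ is nonzero, and then the height is $\lceil s\rceil+1$; but when $s\in\ZZ$ it forces $\mu_0=\mu_1=\mu_2=0$ and the height stays at $s$. Concretely, $\Delta=\conv(0,2e_1,2e_2)$ has $\delta=(1,3,0)$, yet your $\Delta'$ has $\delta=(1,1,2,0,0)$ rather than $(1,0,3,0,0)$, because the box point $\tfrac12(2e_1)+\tfrac12(2e_2)$ has integral coefficient sum and does not move up. To salvage the induction you must add and propagate the invariant that no nonzero box point of your simplices has $\sum_i\nu_i\in\ZZ$; this does hold for the base simplex of Lemma~\ref{full} (there $\sum_i\nu_i=mt/(am+1)$ with $1\le t\le am$, never integral since $\gcd(am+1,m)=1$) and is in fact preserved by your shift, but neither fact appears in your write-up, and the base case itself is only described, not constructed. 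As it stands, the sufficiency half is an outline whose central lemma is stated in a form that is false in general.
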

	 This Theorem is a generalization of \cite[Theorem 2.1]{Higashitani}.
	 
	 Moreover, we consider the Ehrhart polynomials of flat $\delta$-vectors.
	 Let $k$ and $\ell$ be positive integers, and
	 let $\Pc$ and $\Qc$ be integral convex polytopes of dimension $d$
	 such that the following conditions are satisfied:
	 \begin{itemize}
	 	\item For $t=1,\ldots,k$, we have $i(\Pc,t)=i(\Qc,t)$;
	 	\item For $t=1,\ldots,\ell$, we have $i^*(\Pc,t)=i^*(\Qc,t)$. 
	 \end{itemize}
	 Since the degree of Ehrhart polynomials equal the dimension of underlying integral convex polytopes and the constant equals 1, and by Ehrhart reciprocity, if $k+\ell \geq d$,
	 then we know that $\Pc$ and $\Qc$ have a common Ehrhart polynomial.
	 However, if $k+\ell \leq d-1$, then $\Pc$ and $\Qc$ don't necessarily have a common Ehrhart polynomial. 
	 By the characterization of flat $\delta$-vectors, we will show the following theorems.
	 \begin{Theorem}
	 	\label{main}
	 	Let $d \geq 1$.
	 	Then for any $k,\ell \geq 0$ with $k+\ell \leq d-1$,  there exist integral convex polytopes $\Pc$ and $\Qc$ of dimension $d$ such that
	 	the followings are satisfied:
	 	\begin{itemize}
	 		\item For $t=1,\ldots,k$, we have $i(\Pc,t)=i(\Qc,t);$\\
	 		\item For $t=1,\ldots,\ell$, we have $i^*(\Pc,t)=i^*(\Qc,t);$\\
	 		\item $i(\Pc,k+1) \neq i(\Qc,k+1)$ and $i^*(\Pc,\ell+1)\neq i^*(\Qc,\ell+1).$
	 	\end{itemize}
	 \end{Theorem}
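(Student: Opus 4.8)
The plan is to translate the four Ehrhart-theoretic conditions into conditions on the coordinates of the two $\delta$-vectors and then realize suitable $\delta$-vectors by invoking Theorem \ref{joint}. Recall that $i(\Pc,n)=\sum_{j=0}^d\binom{n+d-j}{d}\delta_j$, and that combining this with Ehrhart reciprocity and the identity $\binom{-M}{d}=(-1)^d\binom{M+d-1}{d}$ gives $i^*(\Pc,n)=\sum_{j=0}^d\binom{n+j-1}{d}\delta_j$. First I would observe that for $1\le t\le d$ one has $\binom{t+d-j}{d}=0$ whenever $t<j\le d$, so $i(\Pc,t)$ depends only on $\delta_0,\dots,\delta_t$, and the coefficient of $\delta_t$ equals $\binom{d}{d}=1$. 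Dually, $\binom{t+j-1}{d}=0$ for $0\le j\le d-t$, so $i^*(\Pc,t)$ depends only on $\delta_{d-t+1},\dots,\delta_d$, with the coefficient of $\delta_{d-t+1}$ again equal to $1$. Hence the system is triangular in both directions: $i(\Pc,t)=i(\Qc,t)$ for $t=1,\dots,k$ is equivalent to $\delta_j(\Pc)=\delta_j(\Qc)$ for $j=0,\dots,k$, and then $i(\Pc,k+1)\ne i(\Qc,k+1)$ is equivalent to $\delta_{k+1}(\Pc)\ne\delta_{k+1}(\Qc)$; symmetrically, agreement of $i^*$ up to $\ell$ is equivalent to agreement of the top coordinates $\delta_{d-\ell+1},\dots,\delta_d$, and $i^*(\Pc,\ell+1)\ne i^*(\Qc,\ell+1)$ is equivalent to $\delta_{d-\ell}(\Pc)\ne\delta_{d-\ell}(\Qc)$.

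Thus the task reduces to producing two realizable flat $\delta$-vectors that agree in coordinates $0,\dots,k$ and in coordinates $d-\ell+1,\dots,d$, but differ in coordinate $k+1$ and in coordinate $d-\ell$. For a fixed $a\ge 1$ I would take $\Pc$ with $\delta(\Pc)=(1,a,\dots,a,0,\dots,0)$ whose block of $a$'s fills positions $1,\dots,d-\ell$, and $\Qc$ with $\delta(\Qc)=(1,a,\dots,a,0,\dots,0)$ whose block of $a$'s fills positions $1,\dots,k$. Both have no leading zeros and trailing-zero blocks of lengths $\ell$ and $d-k$ respectively, so by Theorem \ref{joint} (leading-zero count $0$, which is at most the trailing-zero count) each is the $\delta$-vector of an integral convex polytope of dimension $d$. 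I would then verify directly, using $k\le d-\ell-1$ and $d-\ell+1>k$: in positions $1,\dots,k$ both vectors equal $a$; in position $k+1$ the vector of $\Pc$ is $a$ and that of $\Qc$ is $0$; in positions $d-\ell+1,\dots,d$ both vectors vanish; and in position $d-\ell$ the vector of $\Pc$ is $a$ and that of $\Qc$ is $0$. Feeding these equalities and inequalities into the triangular relations of the first paragraph yields all three assertions. The only degenerate point is $k=0$, where the block of $\Qc$ is empty; there I would take $\Qc$ to be a unimodular simplex with $\delta(\Qc)=(1,0,\dots,0)$, or equivalently keep the same shape as $\Pc$ but change its value to some $b\ne a$.

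The main obstacle is precisely the asymmetry $k\le\ell$ in the realizability criterion of Theorem \ref{joint}. The naive attempt, letting $\delta(\Pc)=(1,0^k,a,\dots,a,0^\ell)$ so that the two differences sit at the ``expected'' positions, fails when $k>\ell$, since such a vector is simply not realizable. The device in the construction above is to avoid leading zeros altogether: both $\delta$-vectors begin with the value $a$ in position $1$, so agreement on the bottom coordinates is automatic, and the two differences (at $k+1$ and at $d-\ell$) are produced by \emph{shortening} the block of $\Qc$ rather than by shifting where it starts. This keeps both leading-zero counts equal to $0$, so the hypothesis of Theorem \ref{joint} holds for every $k,\ell\ge 0$ with $k+\ell\le d-1$, with no case distinction on the sign of $k-\ell$.
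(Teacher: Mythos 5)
Your construction is the same as the paper's: both take two flat $\delta$-vectors with no leading zeros, one with the block of $a$'s filling positions $1,\dots,k$ and the other filling positions $1,\dots,d-\ell$, realize them via Theorem \ref{joint}, and read off the four conditions from the triangular dependence of $i(\Pc,t)$ on $\delta_0,\dots,\delta_t$ and of $i^*(\Pc,t)$ on $\delta_{d-t+1},\dots,\delta_d$ (the paper carries out this triangularity explicitly with the binomial sums). Your argument is correct, and your explicit handling of the degenerate case $k=0$ via the unimodular simplex is a point the paper leaves implicit.
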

	 \begin{Theorem}
	\label{main2}
	Let $d \geq 1$.
	Then for any $0 \leq k \leq \ell \leq d-k-1$,  there exists an infinite family $\{\Pc_1,\Pc_2,\ldots\}$ of integral convex polytopes of dimension $d$ such that
	for each $\Pc_i$ and $\Pc_j$ with $i \neq j$, the followings are satisfied:
	\begin{itemize}
		\item For $t=1,\ldots,k$, we have $i(\Pc_i,t)=i(\Pc_j,t);$\\
		\item For $t=1,\ldots,\ell$, we have $i^*(\Pc_i,t)=i^*(\Pc_j,t);$\\
		\item $i(\Pc_i,k+1) \neq i(\Pc_j,k+1)$ and $i^*(\Pc_i,\ell+1)\neq i^*(\Pc_j,\ell+1).$
	\end{itemize}
\end{Theorem}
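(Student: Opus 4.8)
The plan is to translate the three bulleted requirements, which are stated in terms of the Ehrhart functions $i(\Pc,n)$ and $i^*(\Pc,n)$, into purely combinatorial conditions on the $\delta$-vectors, and then to exhibit the infinite family directly by feeding a one-parameter flat shape into Theorem \ref{joint}. First I would record the two expansions
\[
i(\Pc, n) = \sum_{j=0}^{d} \binom{n+d-j}{d}\delta_j, \qquad i^*(\Pc, n) = \sum_{j=0}^{d} \binom{n+j-1}{d}\delta_j,
\]
the first being listed in the Introduction and the second following from it by Ehrhart reciprocity $i^*(\Pc,n)=(-1)^d i(\Pc,-n)$ together with the negation identity $\binom{-m}{d}=(-1)^d\binom{m+d-1}{d}$. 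Since $\binom{n+d-j}{d}=0$ for $j>n$ and $\binom{n+j-1}{d}=0$ for $j<d-n+1$, these expansions show that for $t\leq d$ the value $i(\Pc,t)$ depends only on $\delta_0,\ldots,\delta_t$ while $i^*(\Pc,t)$ depends only on $\delta_{d-t+1},\ldots,\delta_d$; moreover in each case the coefficient of the ``newest'' entry is $\binom{d}{d}=1$.

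From this dictionary the requirements translate cleanly. Two $d$-dimensional polytopes satisfy $i(\Pc_i,t)=i(\Pc_j,t)$ for $t=1,\ldots,k$ and $i^*(\Pc_i,t)=i^*(\Pc_j,t)$ for $t=1,\ldots,\ell$ precisely when their $\delta$-vectors agree in the first $k+1$ coordinates $\delta_0,\ldots,\delta_k$ and in the last $\ell$ coordinates $\delta_{d-\ell+1},\ldots,\delta_d$; and granting these agreements, $i(\Pc_i,k+1)\neq i(\Pc_j,k+1)$ and $i^*(\Pc_i,\ell+1)\neq i^*(\Pc_j,\ell+1)$ hold exactly when the $\delta$-vectors differ in coordinate $\delta_{k+1}$ and in coordinate $\delta_{d-\ell}$, respectively, the unit coefficient noted above being what lets a single differing entry be detected.

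I would then build the family from flat $\delta$-vectors. For each integer $a\geq 1$ set
\[
\delta^{(a)} = \bigl(1, \underbrace{0, \ldots, 0}_{k}, \underbrace{a, \ldots, a}_{d-k-\ell}, \underbrace{0, \ldots, 0}_{\ell}\bigr),
\]
so that the block of entries equal to $a$ occupies coordinates $k+1$ through $d-\ell$ and all other coordinates beyond $\delta_0=1$ vanish. The hypothesis $k+\ell\leq d-1$ forces $k+1\leq d-\ell$, so this block is nonempty and contains both coordinate $k+1$ and coordinate $d-\ell$. Since $k\leq\ell$, Theorem \ref{joint} furnishes an integral convex polytope $\Pc_a\subset\RR^d$ of dimension $d$ whose $\delta$-vector is exactly $\delta^{(a)}$, and I would take $\{\Pc_1,\Pc_2,\ldots\}$ as the desired family.

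It remains to verify the translated conditions for $\Pc_a$ and $\Pc_b$ with $a\neq b$, and this is routine: $\delta^{(a)}$ and $\delta^{(b)}$ agree in coordinates $0,\ldots,k$ (all equal to $1,0,\ldots,0$) and in coordinates $d-\ell+1,\ldots,d$ (all equal to $0$), yielding the two families of equalities, whereas coordinate $k+1$ equals $a$ versus $b$ and likewise coordinate $d-\ell$, so $a\neq b$ produces the two required inequalities. The one genuine obstacle is the restriction $k\leq\ell$: it is precisely the realizability criterion of Theorem \ref{joint} for this flat shape, so without it the construction yields no polytope at all, which explains why Theorem \ref{main2} is stated under $0\leq k\leq\ell\leq d-k-1$ rather than the weaker bound $k+\ell\leq d-1$ available in Theorem \ref{main}.
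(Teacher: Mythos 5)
Your proposal is correct and follows essentially the same route as the paper: both realize the one-parameter family of flat $\delta$-vectors $(1,0,\ldots,0,a,\ldots,a,0,\ldots,0)$ with the $a$-block in positions $k+1$ through $d-\ell$ via Theorem \ref{joint} (using $k\leq\ell$), and both detect the required equalities and inequalities from the vanishing of $\binom{t+d-i}{d}$ for $i>t$ and the unit coefficient $\binom{d}{d}=1$ at the first differing coordinate. Your reciprocity-based ``dictionary'' is just a slightly more systematic packaging of the same binomial computations the paper carries out directly.
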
 
	 In Section 1, we recall  the some properties and the calculation method on the $\delta$-vectors of integral simplices.
	 In Section 2, we prove Theorem \ref{joint}, \ref{main} and \ref{main2}.
	 
\section{Preliminaries}
	At first, we recall some properties of $\delta$-vectors.
	There are two well-known inequalities on $\delta$-vectors.
	Let $s = \max\left\{i : \delta_i \neq 0 \right\}$.
	One inequality is
	\begin{align}
		\delta_0+\delta_1+\cdots+\delta_{i}\leq \delta_s+\delta_{s-1}+\cdots+\delta_{s-i}, \ \ 0 \leq i \leq \lfloor s/2 \rfloor,
	\end{align}
	which was proved by Stanley \cite{RS_OHF}, and another one is
	\begin{align}
		\delta_d+\delta_{d-1}+\cdots+\delta_{d-i}\leq \delta_1+\delta_2+\cdots+\delta_{i+1}, \ \ 0 \leq i \leq\lfloor(d-1)/2\rfloor ,
	\end{align}
	which appears in the work of the first author \cite[Remark 1.4]{HibiLBT}.
	Also, there are more recent and more general results on $\delta$-vectors by Alan Stapledon in \cite{Stap}.
	
	Moreover, we recall the following lemma.
	\begin{Lemma}
		\label{delta}
	Suppose that $(\delta_0, \delta_1, \ldots,\delta_d)$ is the $\delta$-vector of an integral convex polytope of dimension $d$.
	Then there exists an integral convex polytope of dimension $d+1$ whose $\delta$-vector is $(\delta_0, \delta_1, \ldots,\delta_d,0)$.
\end{Lemma}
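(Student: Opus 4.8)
The plan is to realize the desired $(d+1)$-dimensional polytope as a pyramid over $\Pc$. Concretely, embed $\Pc$ in the hyperplane $\{x_{d+1}=0\}$ of $\RR^{d+1}$ as $\Pc \times \{0\}$ and set
\[ \Qc = \conv\bigl((\Pc \times \{0\}) \cup \{(\mathbf{0},1)\}\bigr) \subset \RR^{d+1}, \]
where $\mathbf{0}$ denotes the origin of $\RR^d$. Since $\Pc$ is full-dimensional in $\RR^d$ and the apex lies off the hyperplane $\{x_{d+1}=0\}$, the polytope $\Qc$ is an integral convex polytope of dimension $d+1$, and it is the only polytope we need to analyze.

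First I would compute $i(\Qc,n)$ by slicing $n\Qc$ with the horizontal hyperplanes $\{x_{d+1}=h\}$ for $h=0,1,\ldots,n$. A point of $\Qc$ has the form $(1-t)(x,0)+t(\mathbf{0},1)=((1-t)x,t)$ with $x \in \Pc$ and $t \in [0,1]$, so the slice of $n\Qc$ at integer height $h$ is exactly $(n-h)\Pc \times \{h\}$. Hence this slice contains $i(\Pc,n-h)$ lattice points, and summing over $h$ gives
\[ i(\Qc,n) = \sum_{h=0}^{n} i(\Pc,n-h) = \sum_{m=0}^{n} i(\Pc,m). \]

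Next I would pass to generating functions. The partial-sum identity $\sum_{n\ge 0}\bigl(\sum_{m=0}^{n} i(\Pc,m)\bigr)\lambda^n = \frac{1}{1-\lambda}\sum_{m\ge 0} i(\Pc,m)\lambda^m$ shows that the Ehrhart series of $\Qc$ is $\frac{1}{1-\lambda}$ times that of $\Pc$. Writing $\delta_{\Pc}(\lambda)=\sum_{j=0}^{d}\delta_j\lambda^j$ and multiplying $\sum_{n\ge 0} i(\Pc,n)\lambda^n = \delta_{\Pc}(\lambda)/(1-\lambda)^{d+1}$ by $1/(1-\lambda)$ yields
\[ \sum_{n\ge 0} i(\Qc,n)\lambda^n = \frac{\delta_{\Pc}(\lambda)}{(1-\lambda)^{d+2}}. \]
Since $\dim \Qc = d+1$, the left-hand side equals $\delta_{\Qc}(\lambda)/(1-\lambda)^{d+2}$ by the definition of the $\delta$-vector, so $\delta_{\Qc}(\lambda)=\delta_{\Pc}(\lambda)$. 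Read off as a polynomial of degree at most $d+1$, this says precisely that the $\delta$-vector of $\Qc$ is $(\delta_0,\ldots,\delta_d,0)$, as required.

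I expect the only point requiring genuine care to be the slicing step: one must verify that every lattice point of $n\Qc$ has an integer last coordinate $h$ with $0 \le h \le n$, and that each horizontal slice is exactly the lattice translate of a dilate of $\Pc$, so that its lattice-point count is indeed $i(\Pc,n-h)$. Once this geometric bookkeeping is in place, the remaining steps are purely formal manipulations of the Ehrhart and generating series.
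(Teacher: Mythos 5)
Your proof is correct: the pyramid construction, the slicing computation $i(\Qc,n)=\sum_{m=0}^{n}i(\Pc,m)$, and the resulting identity $\delta_{\Qc}(\lambda)=\delta_{\Pc}(\lambda)$ are all sound, and the slice at integer height $h$ really is a lattice translate of $(n-h)\Pc$, so its lattice-point count is $i(\Pc,n-h)$. The paper merely recalls this lemma without proof, and what you have written is exactly the standard pyramid argument one finds in the references, so there is nothing to reconcile.
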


Next, we recall 
the well-known combinatorial technique how to compute 
the $\delta$-vector of an integral simplex. 
Given an integral simplex $\Fc \subset \RR ^d$ of dimension $d$ 
with the vertices $v_0, v_1, \ldots, v_d \in \RR^d$, we set 

$$\textnormal{Box}(\Fc)=\left\{ \alpha \in \ZZ^{d+1} : \alpha = \sum_{i=0}^d \lambda_i(v_i,1), \;\;
	0 \leq \lambda_i < 1 \right\}. $$

We define the degree of $\alpha=\sum_{i=0}^{d}\lambda_i(v_i,1) \in \text{Box}(\Fc) \cap \ZZ^d$ 
with $\deg(\alpha)=\sum_{i=0}^d \lambda_i$, i.e., the last coordinate of $\alpha$. 
Then we have the following lemma.
\begin{Lemma}\label{compute}
	Let $\delta(\Fc)=(\delta_0,\delta_1,\ldots,\delta_d).$ 
	Then each $\delta_i$ is equal to the number of integer points $\alpha \in \textnormal{Box}(\Fc)$ with $\deg(\alpha)=i$. 
\end{Lemma}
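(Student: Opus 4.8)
The plan is to pass to the cone over $\Fc$ and exploit the unique "box decomposition" of its lattice points. Set $C = \cone\{(v_0,1),\ldots,(v_d,1)\} \subset \RR^{d+1}$. Since $\Fc$ is a $d$-dimensional simplex, its vertices are affinely independent, so the lifted vectors $(v_0,1),\ldots,(v_d,1)$ are linearly independent and $C$ is a simplicial cone. First I would record the standard identification: writing $h(\alpha)$ for the last coordinate of $\alpha$, a lattice point $(x,n)\in\ZZ^{d+1}$ with $n\geq 1$ lies in $C$ if and only if $x\in n\Fc\cap\ZZ^d$ (since $(x,n)=\sum\mu_i(v_i,1)$ with $\mu_i\geq 0$ forces $x/n$ to be a convex combination of the $v_i$), while the origin accounts for $n=0$. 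Grading by $h$ therefore yields
$$\sum_{\alpha\in C\cap\ZZ^{d+1}}\lambda^{h(\alpha)}=\sum_{n=0}^\infty i(\Fc,n)\lambda^n.$$

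The heart of the argument is the decomposition. Because the generators are linearly independent, every $\alpha\in C$ has a unique expression $\alpha=\sum_{i=0}^d\mu_i(v_i,1)$ with $\mu_i\geq 0$. Writing $\mu_i=n_i+\lambda_i$ with $n_i=\lfloor\mu_i\rfloor\in\ZZ_{\geq 0}$ and $\lambda_i\in[0,1)$, I would set $\beta=\alpha-\sum_i n_i(v_i,1)=\sum_i\lambda_i(v_i,1)$. Since $\alpha$ and each $(v_i,1)$ have integer coordinates, $\beta$ is an integer point, hence $\beta\in\textnormal{Box}(\Fc)$; uniqueness of the $\mu_i$ forces uniqueness of the pair $(\beta,(n_i))$, and conversely any such pair produces a lattice point of $C$. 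As $h(\alpha)=\deg(\beta)+\sum_i n_i$, this bijection factors the generating function as
$$\sum_{\alpha\in C\cap\ZZ^{d+1}}\lambda^{h(\alpha)}=\Bigl(\sum_{\beta\in\textnormal{Box}(\Fc)}\lambda^{\deg(\beta)}\Bigr)\prod_{i=0}^d\Bigl(\sum_{n=0}^\infty\lambda^n\Bigr)=\frac{\sum_{\beta\in\textnormal{Box}(\Fc)}\lambda^{\deg(\beta)}}{(1-\lambda)^{d+1}}.$$

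Combining the two displays and multiplying by $(1-\lambda)^{d+1}$ gives
$$(1-\lambda)^{d+1}\sum_{n=0}^\infty i(\Fc,n)\lambda^n=\sum_{\beta\in\textnormal{Box}(\Fc)}\lambda^{\deg(\beta)}.$$
By the defining formula of the $\delta$-vector, the left-hand side equals $\sum_{j}\delta_j\lambda^j$, so comparing coefficients of $\lambda^i$ shows that $\delta_i$ counts the box points of degree $i$; note that each degree lies in $\{0,\ldots,d\}$ since $0\leq\sum_i\lambda_i<d+1$, consistent with the range of the $\delta$-vector.

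The step I expect to be the main obstacle is justifying the decomposition rigorously---specifically, that the fractional-part remainder $\beta$ is genuinely a lattice point and that the pair $(\beta,(n_i))$ is unique. Both hinge on the linear independence of $(v_0,1),\ldots,(v_d,1)$, which is precisely where the hypothesis that $\Fc$ is a full-dimensional \emph{simplex}, rather than an arbitrary polytope, is used: for a non-simplex the representation $\alpha=\sum_i\mu_i(v_i,1)$ would fail to be unique and the clean product factorization of the generating function would break down.
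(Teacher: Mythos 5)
Your proof is correct and complete: the passage to the simplicial cone $C=\cone\{(v_0,1),\ldots,(v_d,1)\}$, the unique half-open decomposition $\alpha=\beta+\sum_i n_i(v_i,1)$ with $\beta\in\textnormal{Box}(\Fc)$, and the resulting factorization of the generating function as $\bigl(\sum_{\beta}\lambda^{\deg(\beta)}\bigr)/(1-\lambda)^{d+1}$ is exactly the standard argument for this lemma. The paper itself offers no proof --- it merely recalls the statement as a well-known computational technique --- so there is nothing to contrast with; your write-up supplies the canonical justification, and you correctly identify linear independence of the lifted vertices (hence the simplex hypothesis) as the point on which uniqueness of the decomposition rests.
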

In particular, if $v_0=(0,0,\ldots,0)$, then for $\alpha=\sum_{i=0}^{d}\lambda_i(v_i,1) \in \textnormal{Box}(\Fc) \cap \ZZ^d$, we have $\deg(\alpha)= \left\lceil \sum_{i=1}^d \lambda_i \right\rceil$.

\section{proofs of Theorems}
At first, in order to prove Theorem \ref{joint}, we show the following lemmas.
\begin{Lemma}
\label{even}
Let $d \geq 3$.
For any $1 \leq k \leq \lfloor (d-1)/2 \rfloor$ and for any $a \geq 1$,
there exists an integral convex polytope $\Pc$ of dimension $d$ such that 

	$$\delta(\Pc)
	=(1,\underbrace{0,\ldots,0}_{k}, a,\ldots,a,\underbrace{0,\ldots,0}_{k}).$$ 
\end{Lemma}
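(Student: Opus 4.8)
The plan is to exhibit one explicit integral $d$-simplex and read off its $\delta$-vector with Lemma \ref{compute}. Since we are free to place a vertex at the origin, recall that for $\Fc=\conv(0,e_1,\dots,e_{d-1},v_d)$ every box point is $\alpha=\sum_{i=1}^d\lambda_i(v_i,1)$ with $0\le\lambda_i<1$, its degree equals $\lceil\sum_{i=1}^d\lambda_i\rceil$, and $\alpha$ is determined coordinate by coordinate. Writing $n=d-2k$ (so $n\ge 1$ by the hypothesis $k\le\lfloor(d-1)/2\rfloor$) and $V=1+an$, I would arrange the last coordinate of $v_d$ to be $V$. Then the $d$-th spatial coordinate of $\alpha$ comes only from $v_d$ and equals $\lambda_d V$, so integrality forces $\lambda_d=j/V$ with $j\in\{0,1,\dots,V-1\}$; thus the box points are indexed by $j$, and the normalized volume of $\Fc$ is $V=\sum_i\delta_i$.

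The key design choice is
$$ v_d=(\,\underbrace{-1,\dots,-1}_{d-k-1},\ \underbrace{1,\dots,1}_{k},\ V\,).$$
First I would check that the matrix with rows $e_1,\dots,e_{d-1},v_d$ has determinant $V$, so that $\Fc$ is genuinely a $d$-simplex of normalized volume $V$. Next, fixing $j\ge 1$ and $\lambda_d=j/V$, integrality of $\alpha$ forces $\lambda_i=\{j/V\}=j/V$ in each of the $d-k-1$ coordinates where $v_d$ is $-1$, and $\lambda_i=\{-j/V\}=(V-j)/V$ in each of the $k$ coordinates where $v_d$ is $+1$. Summing these together with $\lambda_d=j/V$ gives $\sum_{i=1}^d\lambda_i=k+nj/V$, so by Lemma \ref{compute} the box point of index $j$ has degree $k+\lceil nj/V\rceil$, while $j=0$ is the origin, of degree $0$.

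It then remains to count, and this is the heart of the argument: I must show that as $j$ runs over $1,\dots,V-1$ the value $\lceil nj/V\rceil$ attains each of $1,2,\dots,n$ exactly $a$ times. Because $V=an+1$ we have $\gcd(n,V)=1$, so $nj/V$ is never an integer in this range and $\lceil nj/V\rceil=i$ precisely when $j$ lies in the half-open interval $(\,a(i-1)+(i-1)/n,\ ai+i/n\,]$; a short estimate, using $0\le(i-1)/n<1$ and $0<i/n\le 1$, shows this interval contains exactly the $a$ integers $a(i-1)+1,\dots,ai$ for $i<n$ and $a(n-1)+1,\dots,an$ for $i=n$ (the endpoint $j=V$ being excluded). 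Hence each degree $k+1,\dots,k+n=d-k$ occurs exactly $a$ times, which together with the single degree-$0$ point yields $\delta(\Fc)=(1,0^k,a,\dots,a,0^k)$, as required.

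The main obstacle is precisely this final combinatorial count, and everything hinges on the clever split of the $d-1$ free coordinates of $v_d$ into $d-k-1$ entries $-1$ and $k$ entries $+1$. This split is exactly what simultaneously pushes every nonzero box point above degree $k$ (creating the lower gap of width $k$), pins the top degree at $d-k$ (creating the upper gap), and spreads the $an$ remaining box points uniformly, $a$ per level, across the middle. The uniform spread reduces to the coprimality $\gcd(n,an+1)=1$, which is automatic, so once the construction is written down the verification is clean.
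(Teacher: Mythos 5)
Your proposal is correct and follows essentially the same route as the paper: both take the simplex $\conv(0,e_1,\dots,e_{d-1},v_d)$ with the last coordinate of $v_d$ equal to $a(d-2k)+1$, index the box points by $\lambda_d=j/V$, and compute each degree as $k$ plus a ceiling of a linear function of $j$ (your choice of $v_d$ differs from the paper's, but under the substitution $j\mapsto V-t$ the degree function and the final count coincide with the paper's $f(t)$). The interval-counting argument you give for the fibers of $\lceil nj/V\rceil$ is a valid alternative to the paper's explicit parametrization $t=a\ell+j$.
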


\begin{proof}
We set $\Pc=\text{conv}(\{v_0,\ldots,v_d\}) \subset \RR^d$, where 
\begin{displaymath}
v_i=\left\{
\begin{aligned}
&(0,\ldots ,0),& \ \textnormal{if}& \ i=0,\\
&e_i,&\ \textnormal{if} &\  1 \leq i \leq d-1, \\
&\sum\limits_{j=1}^{d-k} e_j +a(d-2k) \sum\limits_{j=d-k+1}^{d-1} e_j+(a(d-2k) +1)e_d, &\ \textnormal{if} &\  i=d,
\end{aligned}
\right.
\end{displaymath} 
where $e_{1}, \ldots, e_{d}$ are the canonical unit coordinate vectors of $\RR^{d}$.  
We compute the $\delta$-vector of $\Pc$.
Let $\lambda_1, \ldots, \lambda_d \in [0,1)$ such that $\sum_{i=1}^d \lambda_i v_i \in \ZZ^d$. 
Then there exists an integer $t$ with $1\leq t \leq a(d-2k)$ such that $\lambda_d=\cfrac{t}{a(d-2k) +1}$. 
Hence we have  
\begin{displaymath}
\lambda_i=\left\{
\begin{aligned}
&\cfrac{a(d-2k) +1-t}{a(d-2k) +1},&\ \textnormal{if}& \ 1 \leq i \leq d-k ,\\
&\cfrac{t}{a(d-2k) +1},&\  \textnormal{if}& \ d-k +1 \leq i \leq d-1.\\
\end{aligned}
\right.
\end{displaymath}
For  $1\leq t \leq a(d-2k)$, we let $f(t)=\cfrac{a(d-2k) +1-t}{a(d-2k) +1}(d-k)+\cfrac{kt}{a(d-2k) +1}$.
Then we have
$f(t)=d-k -\cfrac{t(d-2k)}{a(d-2k) +1}.$
Let $0 \leq \ell \leq d-2k-1 $ and 
$1 \leq j \leq a$.
Since 
\begin{displaymath}
\begin{aligned}
f(a\ell +j)&=d-k -\cfrac{(a\ell +j)(d-2k)}{a(d-2k) +1}\\
&=d-k -\cfrac{\ell(a(d-2k)+1)-\ell +j(d-2k)}{a(d-2k) +1}\\
&=d-k -\ell -\cfrac{j(d-2k)-\ell}{a(d-2k) +1},
\end{aligned}
\end{displaymath}

we have $\lceil f(a\ell+j) \rceil =d-k-\ell$.
Hence we know that
\begin{displaymath}
\delta_i=\left\{
\begin{aligned}
&1, \ \textnormal{if} \ i=0,\\
&a, \ \textnormal{if} \ k +1 \leq i \leq d-k, \\
&0, \ \textnormal{otherwise},
\end{aligned}
\right.
\end{displaymath} 
as desired.
\end{proof}
\begin{Lemma}
	\label{full}
	For any $d \geq 1$ and for any $a \geq 1$, there exists an integral convex polytope $\Pc \subset \RR^d$ such that
	$\delta(\Pc)
	=(1,a,\ldots,a).$
\end{Lemma}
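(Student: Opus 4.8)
The plan is to exhibit one explicit integral simplex and to read off its $\delta$-vector from the Box method of Lemma \ref{compute}, mirroring the computation in the proof of Lemma \ref{even}. Concretely, I would take
$$\Pc=\conv(\{0,e_1,\ldots,e_{d-1},v_d\})\subset\RR^d,\qquad v_d=da\sum_{j=1}^{d-1}e_j+(da+1)e_d,$$
so that the final vertex is $(da,\ldots,da,da+1)$. The determinant of $(e_1,\ldots,e_{d-1},v_d)$ is the last coordinate $da+1$ of $v_d$, so $\Pc$ is a $d$-dimensional simplex of normalized volume $da+1$, which already matches the target sum $1+da$ of the desired $\delta$-vector and serves as a consistency check.

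Next I would compute the box exactly as in Lemma \ref{even}. Writing a relevant lattice point as $\sum_{i=1}^d\lambda_i v_i$ with each $\lambda_i\in[0,1)$, integrality of the last coordinate forces $\lambda_d=t/(da+1)$ for some $t\in\{0,1,\ldots,da\}$. The reason for choosing every off-diagonal entry of $v_d$ equal to $da$ is the congruence $da\equiv -1\pmod{da+1}$: it gives $da\cdot t/(da+1)=t-t/(da+1)$, so integrality of the remaining coordinates forces $\lambda_i=t/(da+1)$ for all $i$ as well. Thus each box point is indexed by a single integer $t$, and by the last remark of Section 1 its degree equals $\lceil\sum_{i=1}^d\lambda_i\rceil=\lceil dt/(da+1)\rceil$.

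It then remains to count, for each $j$, the number of $t\in\{1,\ldots,da\}$ with $\lceil dt/(da+1)\rceil=j$. Since $0<dt/(da+1)<d$ on this range, every degree lies in $\{1,\ldots,d\}$, so by Lemma \ref{compute} it suffices to show each value is attained exactly $a$ times. The condition $\lceil dt/(da+1)\rceil=j$ reads $(j-1)(da+1)/d<t\le j(da+1)/d$, and because $\gcd(d,da+1)=1$ one has $\lfloor j(da+1)/d\rfloor=ja$ for $1\le j\le d-1$; hence the half-open interval contains exactly $a$ integers for each $j=1,\ldots,d-1$, and the remaining degree $d$ is then forced to occur $da-(d-1)a=a$ times by the total count $da$. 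Adding the single degree-$0$ point coming from $t=0$ yields $\delta(\Pc)=(1,a,\ldots,a)$.

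I expect the only genuine obstacle to be the \emph{choice} of $v_d$: once the $\lambda_i$ collapse to the common value $t/(da+1)$ the counting is routine, and that collapse is exactly what the entries $da$ (through $da\equiv -1\pmod{da+1}$) are engineered to produce. The single delicate point is the boundary case $j=d$, where $j(da+1)/d=da+1$ is an integer lying just outside the range of $t$; this is handled automatically by comparing against the total count $da$ rather than by a separate floor computation.
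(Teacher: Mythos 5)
Your proposal is correct and uses exactly the same simplex as the paper (vertices $0,e_1,\ldots,e_{d-1}$ and $v_d=ad\sum_{j=1}^{d-1}e_j+(ad+1)e_d$), with the same Box-point analysis forcing all $\lambda_i=t/(ad+1)$. The only difference is cosmetic: the paper writes $t=ka+j$ and evaluates $\lceil dt/(ad+1)\rceil=k+1$ directly, whereas you count the preimage of each degree by a floor computation; both yield $a$ points in each degree $1,\ldots,d$.
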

\begin{proof}
	We set $\Pc=\text{conv}(\{v_0,\ldots,v_d\}) \subset \RR^d$, where 
	\begin{displaymath}
	v_i=\left\{
	\begin{aligned}
	&(0,\ldots, 0),& \ \textnormal{if} &\ i=0,\\
	&e_i,& \ \textnormal{if} &\ 1 \leq i \leq d-1, \\
	&ad\sum\limits_{j=1}^{d-1} e_j +(ad +1)e_d,& \ \textnormal{if} &\  i=d.
	\end{aligned}
	\right.
	\end{displaymath} 
	We compute the $\delta$-vector of $\Pc$.
	Let $\lambda_1, \ldots, \lambda_d \in [0,1)$ such that $\sum_{i=1}^d \lambda_i v_i \in \ZZ^d$. 
	Then there exists an integer $t$ with $1\leq t \leq ad$ such that $\lambda_d=\cfrac{t}{ad +1}$. 
	Hence for $1 \leq i \leq d-1$, we have
	$\lambda_i=\cfrac{t}{ad+1}.$
	For  $1\leq t \leq ad$, we let $f(t)=\cfrac{dt}{ad +1}$,
	$0 \leq k \leq d-1  $ and 
	$1 \leq j \leq a$.
	Since 
	$$f(ka+j)=\cfrac{d(ka+j)}{ad +1}
		=\cfrac{k(ad+1)-k+dj}{ad+1}
		=k+\cfrac{jd-k}{ad +1},$$
	we have $\lceil f(ka+j) \rceil=k+1$.
	Hence we know that
	\begin{displaymath}
	\delta_i=\left\{
	\begin{aligned}
	&1,& \ \textnormal{if} &\ i=0,\\
	&a,& \ \textnormal{if} &\ 1 \leq i \leq d. \\
	\end{aligned}
	\right.
	\end{displaymath} 
	as desired.
	\end{proof}
By using Lemma  \ref{delta}, \ref{even} and \ref{full}, and Hibi's inequality, 
the assertion of Theorem \ref{joint} follows.

Next, we prove Theorem \ref{main}.
\begin{proof}[Proof of Theorem \ref{main}]
By Theorem \ref{joint}, there exist integral convex polytopes $\Pc$ and $\Qc$ of dimension $d$ such that 
\begin{displaymath}
\delta_i(\Pc)=\left\{
\begin{aligned}
&1,\ \textnormal{if} \ i=0,\\
&a,\ \textnormal{if} \ 1\leq i \leq k,\\
&0,\ \textnormal{if} \ \textnormal{otherwise},
\end{aligned}
\right.
\end{displaymath} 
and
\begin{displaymath}
\delta_i(\Qc)=\left\{
\begin{aligned}
&1,\ \textnormal{if} \ i=0,\\
&a,\ \textnormal{if} \ 1 \leq i \leq d-\ell, \\
&0,\ \textnormal{if} \ \textnormal{otherwise},
\end{aligned}
\right.
\end{displaymath}
where $a \geq 1.$ 
Then we know that for $t=1,\ldots,\ell$, $i^*(\Pc,t)=i^*(\Qc,t)=0$ 
and $0=i^*(\Pc,\ell+1)\neq i^*(\Qc,\ell +1)=a$.
Since
$$i(\Pc,n)=\binom{n+d}{d}+a\sum\limits_{i=1}^{k}\binom{n+d-i}{d}$$
and
$$i(\Qc,n)=\binom{n+d}{d}+a\sum\limits_{i=1}^{d-\ell}\binom{n+d-i}{d}$$
and  since for $k+1 \leq i \leq d-\ell$ and for $1 \leq t \leq k$, we have $\binom{t+d-i}{d}=0$,
we know that for $t=1,\ldots, k$, we have $i(\Pc,t)=i(\Qc,t)$.
Moreover, since $\sum\limits_{i=k+1}^{d-\ell}\binom{k+1+d-i}{d}=1$,
we have $i(\Pc,k+1) \neq i(\Qc,k+1)$, as desired.
\end{proof}

Next, we prove Theorem \ref{main2}.
\begin{proof}[Proof of Theorem \ref{main2}]
	By Proposition \ref{joint}, there exist integral convex polytopes $\Pc$ and $\Qc$ of dimension $d$ such that 
	\begin{displaymath}
	\delta_i(\Pc)=\left\{
	\begin{aligned}
	&1,\ \textnormal{if} \ i=0,\\
	&a,\ \textnormal{if} \ k+1 \leq i \leq d-\ell, \\
	&0,\ \textnormal{if} \ \textnormal{otherwise},
	\end{aligned}
	\right.
	\end{displaymath} 
	and
	\begin{displaymath}
	\delta_i(\Qc)=\left\{
	\begin{aligned}
	&1,\ \textnormal{if} \ i=0,\\
	&b,\ \textnormal{if} \ k+1 \leq i \leq d-\ell, \\
	&0,\ \textnormal{if} \ \textnormal{otherwise},
	\end{aligned}
	\right.
	\end{displaymath} 
	where $1 \leq a < b$.
	Then we know that for $t=1,\ldots,\ell$, $i^*(\Pc,t)=i^*(\Qc,t)=0$ 
	and $a=i^*(\Pc,\ell+1)\neq i^*(\Qc,\ell +1)=b$.
	Since
	$$i(\Pc,n)=\binom{n+d}{d}+a\sum\limits_{i=k+1}^{d-\ell}\binom{n+d-i}{d}$$
	and
	$$i(\Qc,n)=\binom{n+d}{d}+b\sum\limits_{i=k+1}^{d-\ell}\binom{n+d-i}{d}$$
	and  since for $k+1 \leq i \leq d-\ell$ and for $1 \leq t \leq k$, we have $\binom{t+d-i}{d}=0$,
	we know that for $t=1,\ldots, k$, we have $i(\Pc,t)=i(\Qc,t)$.
	Moreover, since $\sum\limits_{i=k+1}^{d-\ell}\binom{k+1+d-i}{d}=1$,
	we have $i(\Pc,k+1) \neq i(\Qc,k+1)$.
	\end{proof}
	
	By using integral convex polytopes with flat $\delta$-vectors, we can construct an infinite family in 
	Theorem \ref{main2}.
	However, for $0 \leq \ell < k \leq d-\ell-1$, we cannot construct such an infinite fimily.
	Finally, we give the following question.
	\begin{Question}
		 	Let $d \geq 1$.
		 	Then for any $0 \leq \ell < k \leq d-\ell-1$,  does there exist an infinite family $\{\Pc_1,\Pc_2,\ldots\}$ of integral convex polytopes of dimension $d$ such that
		 	for each $\Pc_i$ and $\Pc_j$ with $i \neq j$, the followings are satisfied:
		 	\begin{itemize}
		 		\item For $t=1,\ldots,k$, we have $i(\Pc_i,t)=i(\Pc_j,t);$\\
		 		\item For $t=1,\ldots,\ell$, we have $i^*(\Pc_i,t)=i^*(\Pc_j,t);$\\
		 		\item $i(\Pc_i,k+1) \neq i(\Pc_j,k+1)$ and $i^*(\Pc_i,\ell+1)\neq i^*(\Pc_j,\ell+1)$?
		 	\end{itemize}
		 \end{Question}
	\begin{acknowledgement}
		The authors would like to thank anonymous referees for reading the manuscript carefully.
	\end{acknowledgement}
	

\begin{thebibliography}{99}
		
		\bibitem{BeckRobins}
		M. Beck and S. Robins, ``Computing the Continuous Discretely,''
		Undergraduate Texts in Mathematics, Springer, 2007. 
		
		\bibitem{Ehrhart}
		E. Ehrhart, ``Polyn\^{o}mes Arithm\'{e}tiques et
		M\'{e}thode des Poly\`{e}dres en Combinatoire,''
		Birkh\"{a}user, Boston/Basel/Stuttgart, 1977.
		
		\bibitem{HibiRedBook}
		T. Hibi, ``Algebraic Combinatorics on Convex Polytopes,''
		Carslaw Publications, Glebe NSW, Australia, 1992.
		
		\bibitem{HibiLBT}
		T. Hibi, A lower bound theorem for Ehrhart polynomials of convex polytopes, 
		{\em Adv. in Math.} {\bf 105} (1994), 162--165.
		
		\bibitem{Higashitani}
		A. Higashitani, 
		Counterexamples of the conjecture on roots of Ehrhart polynomials, 
		{\em Discrete Comput. Geom.} {\bf 47} (2012), 618--623. 
		
		\bibitem{StanleyDRCP}
		R. P. Stanley, Decompositions of rational convex polytopes,
		{\em Annals of Discrete Math.} {\bf 6} (1980), 333 -- 342.
		
		\bibitem{StanleyEC}
		R. P. Stanley, ``Enumerative Combinatorics, Volume 1,''
		Wadsworth \& Brooks/Cole, Monterey, Calif., 1986.
		
			\bibitem{RS_OHF}
			R. P. Stanley,
			On the Hilbert function of a graded Cohen-Macaulay domain,
			\textit{J. Pure and Appl.  Algebra} \textbf{73} (1991), 307--314. 
		
		\bibitem{Stap}
		A. Stapledon,
		Inequalities and Ehrhart $\delta$-vectors,
		\textit{Trans. Amer. Math. Soc.} \textbf{361}(2009), 5615--5626.   
	\end{thebibliography}
\end{document}